\newtheorem{theorem}{Theorem}[section]
\newtheorem{remark}[theorem]{Remark}
\DeclareMathOperator\SL{SL}
\DeclareMathOperator\Sp{Sp}
\DeclareMathOperator\PSL{PSL}
\DeclareMathOperator\U{U}
\DeclareMathOperator\Or{O}
\DeclareMathOperator\PGO{PGO}
\DeclareMathOperator\PU{PU}
\begin{document}

\title{An Odd Presentation for $W(\mathrm{E}_6)$}

\author
{Gert Heckman and Sander Rieken\\
Radboud University Nijmegen}

\date{\today}

\maketitle

\begin{abstract}
The Weyl group $W(\mathrm{E}_6)$ has an odd presentation due to Christopher
Simons as factor group of the Coxeter group on the Petersen graph
by deflation of the free hexagons. 
The goal of this paper is to give a geometric meaning for this presentation,
coming from the action of $W(\mathrm{E}_6)$ on the moduli space of marked 
maximally real cubic surfaces and its natural tessellation as seen through 
the period map of Allcock, Carlson and Toledo.
\end{abstract}

\section{Introduction}

We denote by $\mathcal{M}(1^n)$ the moduli space of $n$ ordered mutually distinct
points on the complex projective line. If $n=n_1+\cdots+n_r$ is a partition of $n$
with $r\geq4$ parts we denote by $\mathcal{M}(n_1\cdots n_r)$ the moduli space 
of $r$ points on the complex projective line with weights $n_1,\cdots,n_r$
respectively, and to be viewed as part of a suitable compactification of
$\mathcal{M}(1^n)$ by collisions according to the given partition.

The case of $4$ points is classical and very well known. If $z=(z_1,z_2,z_3,z_4)$
represents a point of $\mathcal{M}(1^4)$ then we consider for the elliptic curve
\[ E(z):\;y^2=\prod(x-z_i)\]
with periods (say $z_i$ are all real with $z_1<z_2<z_3<z_4$)
\[ \pi_i(z)=\int_{z_i}^{z_{i+1}}\frac{dx}{y} \] 
resulting in a coarse period isomorphism (by taking the ratio of two consecutive
periods)
\[ \mathcal{M}(1^4)/S_4\longrightarrow\mathbb{H}/\Gamma  \]
of orbifolds. Here $S_n$ is the symmetric group on $n$ objects and $\Gamma$ is the
modular group $\PSL_2(\mathbb{Z})$ acting on the upper half plane 
$\mathbb{H}=\{\tau\in\mathbb{C};\Im{\tau}>0\}$ by fractional linear transformations.
To remove the orbifold nature one observes an underlying fine period isomorphism
\[ \mathcal{M}(1^4)\longrightarrow\mathbb{H}/\Gamma(4) \]
with $\Gamma(4)$ the principal congruence subgroup of $\Gamma$ of level $4$.
Taking the quotient on the left by $S_4$ and on the right by $\Gamma/\Gamma(4)\cong
S_4$ turns the fine period isomorphism into the previous coarse one.

There are two different real loci: either all 4 points are real or 2 points are
real and 2 are complex conjugate. The first component is called the maximal real 
locus. Under the coarse period isomorphism the maximal real locus corresponds to 
the imaginary axis in $\mathbb{H}$ since $\pi_{i+1}/\pi_i$ is purely imaginary, 
while the other real locus corresponds to the unit circle in $\mathbb{H}$. 
The group $\Gamma(4)$ has $6$ cusps and is of genus $0$ meaning that the 
compactification $\overline{\mathbb{H}}/\Gamma(4)$ by filling in the cusps is just 
isomorphic to the complex projective line.
The $6$ cusps are just the vertices of an octahedron permuted transitively by the
rotation group $S_4$ of the octahedron (permuting pairs of opposite faces).
The maximal real locus corresponds to the $12$ edges of the octahedron, while
the other real locus corresponds to the $24$ diagonals in the faces of the
octahedron.

This simple picture allows a beautiful generalization. If $z=(z_1,\cdots,z_6)$
represents a point of $\mathcal{M}(1^6)$ then we consider the curve 
\[ C(z):\;y^3=\prod(x-z_i)\]
which is of genus $4$ by the Hurwitz formula. The Jacobian $J(C(z))$ is a 
principally polarized Abelian variety of dimension $4$ with an endomorphism 
structure by the group ring $\mathbb{Z}[C_3]$ of the cyclic group of order $3$.
The PEL theory of Shimura \cite{Shimura 1963}, \cite{Shimura 1964}, 
\cite{Casselman 1966} gives that these Jacobians in the full moduli space 
$\mathcal{A}_4=\mathbb{H}_4/\Sp_8(\mathbb{Z})$ form an open dense part of 
a ball quotient $\mathbb{B}/\Gamma$ of dimension $3$. More precisely and thanks 
to the work of Deligne and Mostow \cite{Deligne--Mostow 1986} and of Terada 
\cite{Terada 1983} we have a coarse period isomorphism
\[ \mathcal{M}(1^6)/S_6\longrightarrow\mathbb{B}^{\circ}/\Gamma \]
with $\mathbb{B}^{\circ}/\Gamma$ the complement of a Heegner divisor in
a ball quotient $\mathbb{B}/\Gamma$. More explicitly, let 
$\mathcal{E}=\mathbb{Z}+\mathbb{Z}\omega$ with $\omega=(-1+i\sqrt3)/2$ be the ring
of Eistenstein integers and let 
\[ L=\mathcal{E}\otimes\mathbb{Z}^{3,1} \]
be the Lorentzian lattice over $\mathcal{E}$ then it turns out that
the automorphism group $\U(L)$ is a group generated by the hexaflections
(order 6 complex reflections) in norm one vectors. If $e\in L$ is a norm one 
vector then the hexaflection with root $e$ is defined by 
$h_e(l)=l-(\omega^2+1)\langle l,e\rangle e$.
Here $\langle\cdot,\cdot\rangle$ denotes the sesquilinear form on $L$ of 
Lorentzian signature. Let us denote the complement of the mirrors of all
these hexaflections by $\mathbb{B}^{\circ}$.
The main result of Deligne and Mostow in this particular case can be rephrazed 
by the commutative diagram
\[ \begin{CD}
 \mathcal{M}^{\circ} @>>> \mathcal{M} @>>> \overline{\mathcal{M}}^{\mathrm{HM}}  \\
 @VVV                 @VVV           @VVV                                  \\   
 \mathbb{B}^{\circ}/\Gamma @>>> \mathbb{B}/\Gamma @>>>                    
 \overline{\mathbb{B}}^{\mathrm{BB}}/\Gamma                               \\
\end{CD} \]
with $\mathcal{M}^{\circ}$ short for $\mathcal{M}(1^6)/S_6$. The horizontal maps are 
injective and the vertical maps are isomorphisms from the top horizontal line (the
geometric side) to the bottom horizontal line (the arithmetic side). 
The moduli space
\[ \overline{\mathcal{M}}^{\mathrm{HM}}=
\mathrm{Proj}\left(S(S^6\mathbb{C}^2)^{\SL_2(\mathbb{C})}\right) \] 
is the Hilbert--Mumford compactification of $\mathcal{M}^{\circ}$ through
GIT of degree 6 binary forms, which consists of the open stable locus
$\mathcal{M}$ with at most double collisions and the polystable (also called
strictly semistable) locus, a point with two triple collisions.
In the bottom line we have the ball quotient $\mathbb{B}/\Gamma$ with
$\Gamma=\PU(L)$ and its Baily--Borel compactification
\[ \overline{\mathbb{B}}^{\mathrm{BB}}/\Gamma=
\mathrm{Proj}\left(\mathcal{A}(\mathbb{L}^{\times})^{\U(L)}\right)   \]
with $\mathbb{L}^{\times}=\{v\in\mathbb{C}\otimes\mathbb{Z}^{3,1};\langle
v,v\rangle<0\} \longrightarrow\mathbb{B}=\mathbb{P}(\mathbb{L})$ the natural
$\mathbb{C}^{\times}$-bundle and $\mathcal{A}(\mathbb{L}^{\times})^{\U(L)}$ the
algebra of modular forms, graded by weight
(minus the degree, or maybe better by minus degree/$3$ in order to match with the
degree on the geometric side: the center of $\SL_2(\mathbb{C})$ has order $2$ while
the center of $\U(L)$ has order $6$). 

A similar commutative diagram also holds in the case of ordered points, so
with $\mathcal{M}^{\circ}=\mathcal{M}(1^6)/S_6$ replaced by 
$\mathcal{M}_m^{\circ}=\mathcal{M}(1^6)$ and $\mathrm{U}(L)$ 
replaced by the principal congruence subgroup $\mathrm{U}(L)(1-\omega)$.
The subindex $m$ stands for marking. 
This latter group is generated by all triflections in norm one vectors, 
namely by the squares of the previous hexaflections. So we have a commutative diagram
\[ \begin{CD}
 \mathcal{M}_m^{\circ} @>>> \mathcal{M}_m @>>> \overline{\mathcal{M}}_m^{\mathrm{HM}}  \\
 @VVV                 @VVV           @VVV                                  \\   
 \mathbb{B}^{\circ}/\Gamma(1-\omega) @>>> \mathbb{B}/\Gamma(1-\omega) @>>>                    
 \overline{\mathbb{B}}^{\mathrm{BB}}/\Gamma(1-\omega)                               \\
\end{CD} \] 
The group isomorphism $\Gamma/\Gamma(1-\omega)\cong S_6$ explains that the quotient of 
this commutative diagram by this finite group gives back the former commutative diagram.

The real locus in the space $\mathcal{M}(1^6)/S_6$ of degree $6$ binary forms
with nonzero discriminant has $4$ connected components. There are k complex
conjugate pairs and the remaining points $6-2k$ points are real for
$k=0,1,2,3$ respectively. All $6$ points real is called the maximal real locus,
and will be denoted $\mathcal{M}_r^{\circ}=\mathcal{M}_r(1^6)/S_6$. It was shown by 
Yoshida \cite{Yoshida 1998} that we have a similar commutative diagram
\[ \begin{CD}
 \mathcal{M}_r^{\circ} @>>> \mathcal{M}_r @>>> \overline{\mathcal{M}}_r^{\mathrm{HM}}  \\
 @VVV                 @VVV           @VVV                                  \\   
 \mathbb{B}_r^{\circ}/\Gamma @>>> \mathbb{B}_r/\Gamma @>>>                    
 \overline{\mathbb{B}}_r^{\mathrm{BB}}/\Gamma                               \\
\end{CD} \]
with the bar in the upper horizontal line denoting the real Zariski closure of 
the maximal real locus in the GIT compactification, and the bar in the lower
horizontal line denoting the Baily--Borel compactification of $\mathbb{B}_r$.
Here $\mathbb{B}_r$ is the real hyperbolic ball associated to the Lorentzian lattice 
$\mathbb{Z}^{3,1}$. Likewise $\mathbb{B}_r^{\circ}$ is the complement of the mirrors 
in norm one roots in $\mathbb{Z}^{3,1}$ and $\Gamma=\Or^+(\mathbb{Z}^{3,1})$. 

Likewise we have a marked version in the real case with commutative diagram
\[ \begin{CD}
 \mathcal{M}_{rm}^{\circ} @>>> \mathcal{M}_{rm} @>>> 
 \overline{\mathcal{M}}_{rm}^{\mathrm{HM}}  \\
 @VVV                 @VVV           @VVV                                  \\   
 \mathbb{B}_{r}^{\circ}/\Gamma(3) @>>> \mathbb{B}_{r}/\Gamma(3) @>>>                    
 \overline{\mathbb{B}}_{r}^{\mathrm{BB}}/\Gamma(3)                        \\
\end{CD} \]
with $\mathcal{M}_{rm}^{\circ}=\mathcal{M}_r(1^6)$ the moduli space of $6$ distinct
ordered real points and $\Gamma(3)$ the principal congruence subgroup of 
$\Gamma=\Or^+(\mathbb{Z}^{3,1})$ of level $3$. The group isomorphism 
$\Gamma/\Gamma(3)=\mathrm{PGO}_4(3)\cong S_6$ shows that the quotient of this
commutative diagram by $S_6$ gives the previous commutative diagram just as in 
the complex case.

Deliberately we have suppressed the index $n=3$ of the Lorentzian lattice
$\mathbb{Z}^{n,1}$ because there are similar stories to tell for $n=2,3,4$.
The case $n=2$ corresponds to $\mathcal{M}^{\circ}=\mathcal{M}(21^4)/S_4$ and
$\mathcal{M}_m^{\circ}=\mathcal{M}(21^4)$ and is also due to Deligne and Mostow.
The case $n=4$ corresponds to $\mathcal{M}^{\circ}=\mathcal{M}(cs)$ the moduli
space of smooth cubic surfaces and is due to Allcock, Carlson and Toledo
\cite{Allcock--Carlson--Toledo 2002}. A cubic surface $S$ can be obtained by 
blowing up $6$ points in the projective plane and hence $H_2(S,\mathbb{Z})$
with its insection form is just the lattice $\mathbb{Z}^{1,6}$ with natural basis
$l,e_1,\cdots,e_6$ for a line and the exceptional curves. The anticanonical class
$k=3l-\sum e_i$ has norm $3$ and its orthogonal complement is isomorphic to 
minus the root lattice of type $\mathrm{E}_6$. The choice of such an isomorphism
is called a marking of the cubic surface $S$. The Weyl group $W(\mathrm{E}_6)$ 
permutes these markings in a simply transitive manner. We denote by 
$\mathcal{M}_m^{\circ}=\mathcal{M}_m(cs)$ the moduli space of marked smooth cubic
surfaces. The maximal real locus $\mathcal{M}_r^{\circ}=\mathcal{M}_r(cs)$ is 
by definition the moduli space of smooth real cubic surfaces with $27$ real lines, 
and likewise we denote $\mathcal{M}_{rm}^{\circ}=\mathcal{M}_{rm}(cs)$ for the 
marked covering. All four commutative diagrams remain valid in case $n=4$. 
The group isomorphism $\Gamma/\Gamma(3)=\mathrm{PGO}_5(3)\cong W(\mathrm{E}_6)$ 
shows that the quotient of the commutative diagram in the marked case becomes 
the commutative diagram in the unmarked case.

Consider following commutative diagram
\[ \begin{CD}
\mathcal{M}_{rm}^{\circ} @>>> \overline{\mathcal{M}}_{rm} @>>>
\overline{\mathcal{M}}_r            \\
@VVV                             @VVV                      @VVV                     
                     \\  
\mathbb{B}^{\circ}_r/\Gamma(3) @>>> \overline{\mathbb{B}}_r/\Gamma(3) @>>>
\overline{\mathbb{B}}_r/\Gamma  \\
\end{CD} \]
with $\Gamma/\Gamma(3)=\mathrm{PGO}_{n+1}(3)$ the Weyl group
of type $\mathrm{A}_3,\mathrm{A}_5,\mathrm{E}_6$ for $n=2,3,4$ respectively.
The two left horizontal arrows are inclusions and the two right horizontal maps
are quotient maps for the action of $\Gamma/\Gamma(3)$. In fact we shall for the
moment only consider the bottom horizontal line for all $2\leq n\leq7$, 
independently of the modular interpretation for $n\leq4$. 

Fix a connected component of the mirror complement $\mathbb{B}^{\circ}_r$
of norm one roots in $\mathbb{Z}^{n,1}$ and denote by $P$ its closure in 
$\overline{\mathbb{B}}_r$. It is a fundamental domain for the action on
$\overline{\mathbb{B}}_r$ of the subgroup $\Gamma_1$ of 
$\Gamma=\mathrm{O}^+(\mathbb{Z}^{n,1})$ generated by
the reflections in norm one roots. Clearly $\Gamma_1$ is a subgroup 
of the principal congruence subgroup $\Gamma(2)$ of level $2$. It was shown by 
Everitt, Ratcliffe and Tschantz that $\Gamma_1=\Gamma(2)$ if and only if 
$n\leq7$, which will be assumed from now on. The polytope $P$ will be 
called the Gosset polytope, by analogy with the terminolgy of Coxeter
\cite{Coxeter 1940} in case $n=6$. The symmetry group $\Gamma_0$ of $P$ in $\Gamma$ 
is the Coxeter group of type $\mathrm{E}_n$, with $\mathrm{E}_5=\mathrm{D}_5$,
$\mathrm{E}_4=\mathrm{A}_4$, $\mathrm{E}_3=\mathrm{A}_1\sqcup\mathrm{A}_2$
and $\mathrm{E}_2=\mathrm{A}_1$. For $n\geq3$ it permutes the faces of $P$ 
transitively, and a face of $P^n$ is equal to $P^{n-1}$. The ball quotient 
$\overline{\mathbb{B}}_r/\Gamma(3)$ inherits a regular tessellation by 
polytopes $\gamma P$ with $\gamma\in \Gamma/\Gamma(3)\Gamma_0$. 
The cardinality of the factor space $\Gamma/\Gamma(3)\Gamma_0$ is equal 
to $12,60,432$ for $n=2,3,4$ respectively in accordance with the discussion 
by Yoshida \cite{Yoshida 1998}, \cite{Yoshida 2001}, who gives a description
of this tessellation on the geometric side.

Two walls of $P$ are either orthogonal (with nonempty intersection in
$\mathbb{B}_r$) or parallel (with only intersection at an ideal point of 
$\overline{\mathbb{B}}_r$), and so $P$ is a right angled polytope. 
Equivalently, the Coxeter diagram of the chamber $P$ of the Coxeter group
$\Gamma_1$ has only edges with mark $\infty$. 
This Coxeter diagram (after deletion of all marks $\infty$) is of type 
$\mathrm{A}_3,\tilde{\mathrm{A}}_5$ for $n=2,3$ respectively, while for $n=4$ 
it is the Peterson graph, which we denote by $\mathrm{I}_{10}$.

Since $\Gamma/\Gamma(3)\cong\Gamma(2)/\Gamma(6)$ the group $\Gamma/\Gamma(3)$ is
generated by the cosets modulo $\Gamma(3)$ of a set of generators of $\Gamma(2)$.
Since $\Gamma(2)=\Gamma_1$ is a Coxeter group we take $r_i$ the reflections in the
walls of $P$ as Coxeter generators for $\Gamma(2)$ and hence $t_i=r_i\Gamma(3)$
are generators for $\Gamma/\Gamma(3)$. Because the $r_i$ are reflections the $t_i$
remain involutions in $\Gamma/\Gamma(3)$. Likewise if $r_i$ and $r_j$ commute
so do $t_i$ and $t_j$ commute. The relations between the $t_i$ in dimension $n$ 
are also valid in dimension $n+1$. In dimension $n=2$ it is easy to check that 
$t_it_jt_i=t_jt_it_j$ if the corresponding walls are parallel. Hence we recover
the Coxeter presentation of $S_4$. In all dimensions $2\leq n\leq7$ the group 
$\Gamma/\Gamma(3)$ becomes a factor group of the Coxeter group of the simply
laced Coxeter diagram obtained from that of $P$ by deletion of the marks $\infty$.
For $n=3$ this Coxeter diagram is the affine diagram of type $\tilde{\mathrm{A}}_5$
and it is easy to check that the translation lattice dies in $\Gamma/\Gamma(3)$.
This relation is also called deflation of the free hexagon, and we arrive at the 
following result. 

\begin{theorem}
For $2\leq n\leq7$ the group $\Gamma/\Gamma(3)$ is a factor group of $W/N$.
Here $W$ is the Coxeter group of the simply laced Coxeter diagram 
associated with $P$ and $W/N$ is the quotient by deflation of the free hexagons.
For $n\leq4$ we have in fact equality $\Gamma/\Gamma(3)=W/N$ and for $n=4$
we recover a presentation for $W(\mathrm{E}_6)$ found by Simons \cite{Simons 2005}. 
\end{theorem}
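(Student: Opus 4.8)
The plan is to promote the generators and relations singled out above into an explicit surjection $W/N\twoheadrightarrow\Gamma/\Gamma(3)$, and then, for $n\le4$, to force it to be an isomorphism by a count of orders. Write $W=W^{(n)}$ for the Coxeter group on the diagram of the Gosset polytope $P=P^{(n)}$ with the marks $\infty$ removed, with Coxeter generators $s_i$, and put $t_i=r_i\Gamma(3)\in\Gamma/\Gamma(3)$. The rule $s_i\mapsto t_i$ respects every defining relation of $W$: each $t_i$ is an involution because $r_i$ is a reflection; $(t_it_j)^2=1$ whenever the walls $i,j$ of $P$ are orthogonal, since $r_i$ and $r_j$ already commute in $\Gamma_1$; and $(t_it_j)^3=1$ whenever the walls $i,j$ are parallel. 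Only the last relation is substantive, and since a relation among the $t_i$ valid in dimension $n$ remains valid in dimension $n+1$ it suffices to verify it for $n=2$, where it is the elementary computation — reducing the two reflections in parallel norm-one roots modulo $3$ — that $(r_ir_j)^3\in\Gamma(3)$; this is the very check that produces the Coxeter presentation of $S_4$. Thus $s_i\mapsto t_i$ defines a homomorphism $\phi\colon W\to\Gamma/\Gamma(3)$, and it is onto because the $r_i$ generate $\Gamma_1=\Gamma(2)$ and $\Gamma/\Gamma(3)\cong\Gamma(2)/\Gamma(6)$, so the $t_i$ generate the target.

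I would then check that $\phi$ annihilates $N$. Here $N$ is the normal closure of the translation lattices of the $\tilde{\mathrm{A}}_5$-parabolic subgroups of $W$ supported on the hexagonal subdiagrams; these hexagons form a single orbit of the symmetry group $\Gamma_0$ of $P$ and first occur when $n=3$, so, again by the propagation of relations from dimension $n$ to dimension $n+1$, it suffices to treat $n=3$. There the hexagon is the whole diagram, $W=\tilde W(\mathrm{A}_5)$, and $N=Q(\mathrm{A}_5)$ is already normal, so $\phi$ restricts to a surjection of $\tilde W(\mathrm{A}_5)\cong S_6\ltimes Q(\mathrm{A}_5)$ onto $\Gamma/\Gamma(3)\cong S_6$; any such surjection kills $Q(\mathrm{A}_5)$, since the image of $Q(\mathrm{A}_5)$ is an abelian normal subgroup of $S_6$ and hence trivial. (This is the deflation of the free hexagon: the translation lattice dies in $\Gamma/\Gamma(3)$.) Hence $\phi$ factors through a surjection $\bar\phi\colon W/N\twoheadrightarrow\Gamma/\Gamma(3)$, which is the first assertion. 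For equality when $2\le n\le4$ one compares orders, using $\abs{\Gamma/\Gamma(3)}=\abs{\PGO_{n+1}(3)}$, equal to $24$, $720$ and $51840$ for $n=2,3,4$. When $n=2$ there is no hexagon, so $N=1$ and $W/N=W(\mathrm{A}_3)=S_4$ already has $24$ elements. When $n=3$, $W/N=\tilde W(\mathrm{A}_5)/Q(\mathrm{A}_5)=W(\mathrm{A}_5)=S_6$ has $720$ elements by the structure theory of affine Weyl groups. When $n=4$, $W/N$ is exactly the presentation of Simons, which by \cite{Simons 2005} has order $51840=\abs{W(\mathrm{E}_6)}$. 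In each case $\bar\phi$ is a surjection between finite groups of equal order, hence an isomorphism; for $n=4$ this identifies Simons' group with $\Gamma/\Gamma(3)=W(\mathrm{E}_6)$ and exhibits his presentation through the tessellation of $\overline{\B}_r/\Gamma(3)$ by copies of $P$.

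The \emph{main obstacle} is the bound $\abs{W/N}\le51840$ for $n=4$; everything else is the one-line modular computation for the braid relations, the automatic vanishing of $Q(\mathrm{A}_5)$, and the classical orders of $S_4$ and $S_6$. I would take that bound from \cite{Simons 2005}. One could instead hope to read it off the geometry directly: $\Gamma/\Gamma(3)$ acts simply transitively on the $\abs{\Gamma/\Gamma(3)}$ copies of $P$ in the torsion-free cover $\overline{\B}_r/\Gamma(6)$ (with $\Gamma(6)=\Gamma(2)\cap\Gamma(3)$), galleries between tiles represent words in the $t_i$, and the closed galleries around codimension-two faces and around the ideal vertices where a hexagon of walls meets would account for precisely the defining relations of $W/N$ — but turning this into a proof is a Poincar\'e-polyhedron argument requiring control of the fundamental group of $\overline{\B}_r/\Gamma(6)$, which is the delicate point; hence for $n=4$ we prefer to invoke Simons.
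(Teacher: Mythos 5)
Your proposal is correct and follows essentially the same route as the paper: map the Coxeter generators of $W$ to the involutions $t_i=r_i\Gamma(3)$, verify the braid relations by the elementary mod-$3$ computation for a pair of parallel norm-one roots, kill the free hexagons, and then settle completeness of the relations for $n\le4$ by comparing orders, deferring to Simons for the bound $\abs{W/N}\le51840$ when $n=4$ exactly as the paper does. The only local variation is your treatment of the deflation relation at $n=3$: the paper exhibits the explicit word $t_1t_4t_2t_5t_3t_6t_3t_5t_2t_4=1$ representing a translation by a coroot, whereas you observe that any surjection $\tilde W(\mathrm{A}_5)\twoheadrightarrow S_6$ must annihilate the translation lattice since $S_6$ has no nontrivial abelian normal subgroup --- a slightly slicker argument that trades the computation for the prior identification $\Gamma/\Gamma(3)\cong\PGO_4(3)\cong S_6$, which both arguments use anyway.
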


The fact that for $n=4$ these are a complete set of relations is an easy exercise
with the Petersen graph. The essential point of the theorem is to explain that 
this presentation has a natural geometric meaning from the action of 
$W(\mathrm{E}_6)$ on the moduli space $\overline{\mathcal{M}}_{rm}(cs)$
of marked maximally real cubic surfaces with its natural equivariant tessellation
as seen on the arithmetic side.

We do not know whether for $n=5,6,7$ the generators and relations given in the
theorem for $\Gamma/\Gamma(3)$ suffice to give a presentation. However this 
presentation for $W(\mathrm{E}_6)$ was found by Simons by analogy with similar 
presentations for the orthogonal group $\mathrm{PGO}^-_8(2)$ and the bimonster 
group $M\wr2$ as factor group of the Coxeter group on the incidence graph of 
the projective plane over a field of $2$ and $3$ elements by deflation of the free 
octagons and dodecagons respectively. This presentation of the bimonster was 
found by Conway and Simons \cite{Conway--Simons 2001} as a variation of the 
Ivanov--Norton theorem, which gives the bimonster group as a factor group 
of the Coxeter group $W(Y_{555})$ modulo the spider relation \cite{Ivanov 1992}, 
\cite{Norton 1992}. This presentation for $\mathrm{PGO}^-_8(2)$ and some of 
its subgroups (for example the Weyl group $W(\mathrm{E}_7)$) can be given
a similar geometric meaning. We would like to thank Masaaki Yoshida for
comments on an earlier version of this paper.
 
\section{The odd unimodular lattice $\mathbb{Z}^{n,1}$}

The odd unimodular lattice $\mathbb{Z}^{n,1}$ has basis $e_i$ for
$0\leq i\leq n$ with scalar product $(e_i,e_j)=\delta_{ij}$
for all $i,j$ except for $i=j=0$ in which case $e_0^2=-1$. 
The open set
\[ \mathbb{L}_r^{\times}=\{v\in\mathbb{R}^{n,1};v^2<0\} \]
has two connected components, and the component containing $e_0$ is
denoted by $\mathbb{L}_r^+$. The quotient space
\[ \mathbb{B}_r=\mathbb{L}_r^{\times}/\mathbb{R}^{\times}=
\mathbb{L}_r^+/\mathbb{R}^+ \]
is the real hyperbolic ball. The forward Lorentz group $\Or^+(\mathbb{R}^{n,1})$
is the index two subgroup of the full Lorentz group $\Or(\mathbb{R}^{n,1})$
preserving the component $\mathbb{L}_r^+$ and it acts faithfully on the ball 
$\mathbb{B}_r$. In addition
\[ \Gamma=\Or^+(\mathbb{Z}^{n,1})=
\Or^+(\mathbb{R}^{n,1})\cap\Or(\mathbb{Z}^{n,1}) \]
is a discrete subgroup of $\Or^+(\mathbb{R}^{n,1})$ acting on $\mathbb{B}_r$
properly discontinuously with cofinite volume. It contains reflections 
\[ s_{\alpha}(\lambda)=\lambda-2(\lambda,\alpha)\alpha/\alpha^2 \]
in roots $\alpha\in\mathbb{Z}^{n,1}$ of norm $1$ or norm $2$. Our notation
is $\alpha^2=(\alpha,\alpha)$ for the norm of $\alpha\in\mathbb{Z}^{n,1}$.
The next theorem is a (special case of a more general) result due to Vinberg 
\cite{Vinberg 1980} and for a pedestrian exposition of the proof we refer to 
the lecture notes on Coxeter groups by one of us \cite{Heckman 2013CG}. 

\begin{theorem}\label{Vinberg theorem}
For $2\leq n\leq 9$ the group $\Gamma=\Or^+(\mathbb{Z}^{n,1})$ is generated by
reflections $s_{\alpha}$ in roots $\alpha\in\mathbb{Z}^{n,1}$ of norm $1$ or norm $2$.
Moreover the Coxeter diagram of this reflection group $\Gamma$ is given by
\begin{center}
\psset{unit=1mm}
\begin{pspicture}*(-50,-8)(50,15)
\pscircle(-20,10){1}
\pscircle(-40,0){1}
\pscircle(-30,0){1}
\pscircle(-20,0){1}
\pscircle(-10,0){1}
\pscircle(10,0){1}
\pscircle(20,0){1}
\pscircle(30,0){1}

\psline(-20,9)(-20,1)
\psline(-39,0)(-31,0)
\psline(-29,0)(-21,0)
\psline(-19,0)(-11,0)
\psline(-9,0)(-5,0)
\psline(5,0)(9,0)
\psline(11,0)(19,0)
\psline(20.7,0.5)(29.3,0.5)
\psline(20.7,-0.5)(29.3,-0.5)
\psline(24.5,1.5)(26,0)
\psline(24.5,-1.5)(26,0)

\rput(0,0){$\cdots$}
\rput(-23,10){$0$}
\rput(-40,-4){$1$}
\rput(-30,-4){$2$}
\rput(-20,-4){$3$}
\rput(-10,-4){$4$}
\rput(9,-4){$n-2$}
\rput(21,-4){$n-1$}
\rput(30,-4){$n$}
\end{pspicture}
\end{center}
with simple roots
\[ \alpha_0=e_0-e_1-e_2-e_3,
\alpha_1=e_1-e_2,\cdots,
\alpha_{n-1}=e_{n-1}-e_n,\alpha_n=e_n\;.  \]
For $n=2,3,4$ the Coxeter diagrams become
\begin{center}
\psset{unit=1mm}
\begin{pspicture}*(-55,-8)(55,12)
\pscircle(-50,0){1}
\pscircle(-40,0){1}
\pscircle(-30,0){1}

\pscircle(-20,0){1}
\pscircle(-10,0){1}
\pscircle(0,0){1}
\pscircle(10,0){1}

\pscircle(20,0){1}
\pscircle(30,0){1}
\pscircle(40,0){1}
\pscircle(50,0){1}
\pscircle(40,10){1}

\psline(-49.3,0.5)(-40.7,0.5)
\psline(-49.3,-0.5)(-40.7,-0.5)
\psline(-39,0)(-31,0)
\psline(-45.5,1.5)(-44,0)
\psline(-45.5,-1.5)(-44,0)

\psline(-19,0)(-11,0)
\psline(-9.3,0.5)(-0.7,0.5)
\psline(-9.3,-0.5)(-0.7,-0.5)
\psline(0.7,0.5)(9.3,0.5)
\psline(0.7,-0.5)(9.3,-0.5)
\psline(-5.5,1.5)(-4,0)
\psline(-5.5,-1.5)(-4,0)
\psline(5.5,1.5)(4,0)
\psline(5.5,-1.5)(4,0)

\psline(21,0)(29,0)
\psline(31,0)(39,0)
\psline(40,1)(40,9)
\psline(40.7,0.5)(49.3,0.5)
\psline(40.7,-0.5)(49.3,-0.5)
\psline(44.5,1.5)(46,0)
\psline(44.5,-1.5)(46,0)

\rput(-50,-4){$1$}
\rput(-40,-4){$2$}
\rput(-30,-4){$0$}
\rput(-35,2){$\infty$}

\rput(-20,-4){$1$}
\rput(-10,-4){$2$}
\rput(0,-4){$3$}
\rput(10,-4){$0$}

\rput(20,-4){$1$}
\rput(30,-4){$2$}
\rput(40,-4){$3$}
\rput(50,-4){$4$}
\rput(37,10){$0$}

\end{pspicture}
\end{center}
with $\alpha_0= \alpha_0=e_0-e_1-e_2$ a norm $1$
vector in case $n=2$.
\end{theorem}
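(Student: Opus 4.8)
The plan is to run Vinberg's algorithm for the reflection group of the Lorentzian lattice $\Z^{n,1}$, using the timelike vector $e_0$ as controlling point. Two preliminary reductions organize the argument. First, because $\Z^{n,1}$ is unimodular, every reflection lying in $\Gamma=\Or^+(\Z^{n,1})$ is the reflection in a root of norm $1$ or $2$: if $\alpha$ is primitive and $s_\alpha$ preserves the lattice then $\alpha^2\mid 2(\lambda,\alpha)$ for all $\lambda$, and choosing $\lambda$ with $(\lambda,\alpha)=1$ (possible by unimodularity) forces $\alpha^2\in\{1,2\}$. Hence the subgroup $\Gamma_r$ generated by the norm-$1$ and norm-$2$ reflections is the full reflection subgroup of $\Gamma$. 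Second, by Vinberg's structure theorem $\Gamma_r$ is a Coxeter group with a fundamental chamber $P$, and $\Gamma=\Gamma_r\rtimes H$ with $H=\{g\in\Gamma:gP=P\}$ the symmetry group of $P$. Since we expect exactly the $n+1$ simple roots $\alpha_0,\dots,\alpha_n$, the chamber $P$ will be a hyperbolic Coxeter simplex, and it remains to (i) produce these roots by the algorithm, (ii) check finite volume, and (iii) show $H=1$.

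For the algorithm I order mirrors by distance to $e_0$: for a root $\alpha$ the mirror $H_\alpha$ satisfies $\sinh^2\rho(e_0,H_\alpha)=(e_0,\alpha)^2/\alpha^2$ since $e_0^2=-1$. Writing $\alpha=ae_0+\beta$ with $\beta$ in the positive-definite sublattice $\langle e_1,\dots,e_n\rangle$, the distance vanishes exactly when $a=0$, i.e.\ for the roots orthogonal to $e_0$. These are the roots of the finite group stabilizing $e_0$, a reflection group of type $\mathrm{B}_n$, and I take its standard simple roots $\alpha_1=e_1-e_2,\dots,\alpha_{n-1}=e_{n-1}-e_n$ (norm $2$) and $\alpha_n=e_n$ (norm $1$). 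This fixes the $\mathrm{B}_n$ subdiagram on the nodes $1,\dots,n$, including the double bond between $\alpha_{n-1}$ and $\alpha_n$ coming from $(\alpha_{n-1},\alpha_n)=-1$ with norms $2$ and $1$.

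Next I locate the closest mirror not through $e_0$. For such a root $a\neq0$, and $\sinh^2\rho$ equals $a^2/2$ if $\alpha^2=2$ and $a^2$ if $\alpha^2=1$; the minimum is therefore attained by a norm-$2$ root with $a=\pm1$, giving $\sinh^2\rho=1/2$, while every norm-$1$ outer mirror lies strictly farther out at $\sinh^2\rho\ge1$. For $n\ge3$ the equation $\alpha^2=2$ with $a=1$ forces $|\beta|^2=3$, and among the $W(\mathrm{B}_n)$-images of such roots the unique representative making nonpositive inner products with all the $\alpha_i$ is $\alpha_0=e_0-e_1-e_2-e_3$; a direct check gives $(\alpha_0,\alpha_3)=-1$ and $(\alpha_0,\alpha_i)=0$ otherwise, so $\alpha_0$ joins the $\mathrm{B}_n$ diagram by a single bond at node $3$, producing exactly the stated Coxeter graph. (When $n=2$ there is no vector of square-length $3$ in rank $2$, so the nearest outer mirror is instead the norm-$1$ root $\alpha_0=e_0-e_1-e_2$ at $\sinh^2\rho=1$, which accounts for the special small-dimensional diagram.) The crucial and hardest step is then the termination test: I must verify that the $n+1$ half-spaces bounded by $\alpha_0,\dots,\alpha_n$ already cut out a \emph{finite-volume} simplex, so that Vinberg's algorithm halts and no further root is needed. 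Equivalently the labelled diagram must be quasi-Lann\'er; this holds precisely for $2\le n\le 9$, the top dimension in which a finite-volume hyperbolic Coxeter simplex exists, and it is this finiteness criterion that both closes the enumeration of nearby roots and explains the range of $n$ in the statement.

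Finally I show $H=1$, which identifies $\Gamma$ with $\Gamma_r$. Any $g\in H$ permutes the walls of $P$, hence permutes $\{\alpha_0,\dots,\alpha_n\}$ preserving both norms and inner products; it is thus an automorphism of the Coxeter diagram respecting the norm labelling. For $n\ge3$ the only norm-$1$ simple root is $\alpha_n$, so $g$ fixes $\alpha_n$, then the double bond forces $g$ to fix $\alpha_{n-1}$, and propagating along the chain (with the branch $\alpha_0$ at node $3$) pins every node; the analogous rigidity holds for the small diagrams. Since the $\alpha_i$ form a basis of $\R^{n,1}$ and $g$ fixes each inward normal, $g$ is the identity. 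Therefore $\Gamma=\Gamma_r$ is generated by the reflections in the norm-$1$ and norm-$2$ roots with the asserted Coxeter diagram and simple roots, and specializing $2\le n\le4$ yields the three explicit diagrams.
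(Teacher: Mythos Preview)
The paper does not give its own proof of this theorem; the sentence immediately preceding the statement attributes the result to Vinberg and points to external lecture notes for a detailed argument. Your sketch---running Vinberg's algorithm from the controlling vector $e_0$, picking up the $\mathrm{B}_n$ system at height zero, then $\alpha_0$ as the unique nearest outer wall (with the correct special analysis for $n=2$), and finishing by observing that the normed Coxeter diagram has no nontrivial automorphism so that the residual symmetry group $H$ is trivial---is exactly the standard proof those references contain, and it is correct in outline.

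The one place where you have asserted rather than argued is the termination step. Saying the diagram is quasi-Lann\'er ``precisely for $2\le n\le 9$'' and alluding to the general upper bound on the dimension of hyperbolic Coxeter simplices is suggestive but not a proof: the classification of quasi-Lann\'er diagrams is not being invoked as an input here, so one must actually verify for each $n$ in the range that every maximal proper subdiagram obtained by deleting one node is elliptic or parabolic while the full diagram is neither. This is a routine finite check (deleting node $n$ gives $\mathrm{E}_n$, deleting node $1$ gives a diagram that is affine exactly at $n=9$, etc.), but it is the substance of both the halting criterion and the range restriction on $n$, and in a self-contained proof it should be carried out rather than absorbed into a one-line appeal.
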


The vertices of the closed fundamental chamber $D$ in $\overline{\mathbb{B}}_r$ 
are represented by the vectors (for $j=3,\cdots,n$)
\[ v_0=e_0,v_1=e_0-e_1,
v_2=2e_0-e_1-e_2,
v_j=3e_0-e_1-e_2-\cdots-e_j \]
as (anti)dual basis of the basis of simple roots. 
Let $D_0$ be the face of $D$ cut out by the long simple roots.
Hence $D_0$ is the edge of the triangle $D$ with vertices represented
by $v_0,v_2$ for $n=2$, while $D_0$ is the vertex of the simplex $D$
represented by $v_n$ for $3\leq n\leq 9$. 
Let $\Gamma_0$ be the subgroup of $\Gamma$ generated by the long simple roots, 
and so $\Gamma_0$ is the stabilizer of the face $D_0$. Clearly the group 
$\Gamma_0$ is a finite Coxeter group (of type $\mathrm{A}_1,
\mathrm{A}_1\sqcup\mathrm{A}_2,\mathrm{A}_4,\mathrm{D}_5,\mathrm{E}_6,
\mathrm{E}_7,\mathrm{E}_8$ respectively) for $2\leq n\leq 8$, which will be 
assumed from now on.
 
The convex polytope $P$ defined by
\[ P=\cup_{w\in\Gamma_0}\;wD \]
is the star of $D_0$, and will be called the Gosset polytope. 
The walls of $D$ which do not meet the relative interior of $D_0$ 
are cut out by the mirrors of the short simple roots. For $n=2$ there are
$2$ such edges of $D$ and for $3\leq n\leq 8$ there is just a unique such
wall of $D$. Hence the interior of $P$ is just a connected component of 
the complement of all mirrors in norm $1$ roots, and $P$ is a fundamental 
chamber for the normal subgroup $\Gamma_1$ of $\Gamma$ generated by the 
reflections in norm $1$ roots. Note that $\Gamma_1$ is in fact a subgroup
of the principal congruence subgroup $\Gamma(2)$ of $\Gamma$ of level $2$.
Because $\Gamma_0=\{w\in\Gamma;wP=P\}$ and the reflection group
$\Gamma_1$ is normal in $\Gamma$ and has $P$ as fundamental chamber we have 
the semidirect product decomposition $\Gamma=\Gamma_1\rtimes\Gamma_0$.

For $3\leq n\leq 8$ all walls of $P^n$ are congruent and of the form 
$P^{n-1}$. By induction on the dimension it can be shown that the 
set of vertices of $P$ consists of two orbits under $\Gamma_0$. One orbit
$\Gamma_0v_0$ are the actual vertices and the other orbit $\Gamma_0v_1$ are 
the ideal vertices of $P$. In turn this shows by a local analysis at $v_0$ 
and $v_1$ that all dihedral angles of $P$ inside $\mathbb{B}_r$ are $\pi/2$, 
and so $P$ is a right-angled polytope. Of course, at ideal vertices of $P$
the dihedral angle of intersecting walls can be $0$ as well. In other
words, the Coxeter diagram of the group of $\Gamma_1$ generated
by reflections in norm $1$ roots with fundamental chamber $P$ has 
only edges with mark $\infty$. The next result is due to Everitt, 
Ratcliffe and Tschantz \cite{Everitt--Ratcliffe--Tschantz 2010}.

\begin{theorem}\label{ERT theorem}
For $2\leq n\leq 7$ the group $\Gamma(2)$ is generated by reflections 
in norm $1$ roots, while for $n=8$ the subgroup of $\Gamma(2)$ generated 
by reflections in norm $1$ roots has index $2$.
\end{theorem}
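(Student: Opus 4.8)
\emph{The plan} is to reduce the statement to a single computation inside the finite group $\Gamma_0$, and then to settle that computation by a dimension-shifting argument anchored at $n=8$. Reflection in a norm one root $\alpha$ is $s_\alpha(\lambda)=\lambda-2(\lambda,\alpha)\alpha\equiv\lambda\pmod 2$, so $\Gamma_1\subseteq\Gamma(2)$. Combining the semidirect decomposition $\Gamma=\Gamma_1\rtimes\Gamma_0$ with $\Gamma_1\subseteq\Gamma(2)$ and the normality of $\Gamma_1$ in $\Gamma$: if $\gamma=\gamma_1\gamma_0\in\Gamma(2)$ is the decomposition with $\gamma_1\in\Gamma_1$ and $\gamma_0\in\Gamma_0$, then $\gamma_0=\gamma_1^{-1}\gamma\in\Gamma(2)$ as well. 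Hence $\Gamma(2)=\Gamma_1\rtimes(\Gamma_0\cap\Gamma(2))$ and
\[ [\Gamma(2):\Gamma_1]=|K_n|,\qquad K_n:=\Gamma_0\cap\Gamma(2)=\ker\bigl(\rho_n\colon\Gamma_0\to\Or(\Z^{n,1}/2\Z^{n,1})\bigr), \]
the kernel of reduction modulo $2$. Thus the theorem is equivalent to the assertions $K_n=1$ for $2\le n\le7$ and $|K_8|=2$.

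Next I establish monotonicity. Write $\Gamma_0^{(n)}$ for the group $\Gamma_0$ in dimension $n$, regarded through the standard inclusions $\Z^{n,1}\subseteq\Z^{n+1,1}$ as a subgroup of $\Gamma_0^{(8)}=W(\mathrm E_8)$. The long simple roots $\alpha_0,\dots,\alpha_{n-1}$ generating $\Gamma_0^{(n)}$ are the very same vectors as the first $n$ long simple roots in dimension $n+1$, and none of them involves $e_{n+1}$. Since $\Z^{n+1,1}=\Z^{n,1}\perp\langle e_{n+1}\rangle$ and $\Gamma_0^{(n)}$ fixes $e_{n+1}$, any $g\in K_n$ still acts trivially modulo $2$ on $\Z^{n+1,1}$, so $g\in K_{n+1}$. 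This yields a chain $K_2\subseteq K_3\subseteq\cdots\subseteq K_8$ inside $W(\mathrm E_8)$.

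The crux is the base case $n=8$. Here $v_8^2=-1$ and the $\mathrm E_8$ root lattice $Q$ is unimodular, so $Q\perp\langle v_8\rangle$ is unimodular of signature $(8,1)$ and therefore equals $\Z^{8,1}$; as $W(\mathrm E_8)$ fixes $v_8$, the map $\rho_8$ is governed by the action on $Q/2Q$. If $g\in K_8$ then $g\alpha\equiv\alpha\pmod{2Q}$ for every root $\alpha$. For two roots with $\alpha-\beta\in 2Q$ one computes $(\alpha-\beta)^2=4-2(\alpha,\beta)\in8\Z$, forcing $(\alpha,\beta)\equiv2\pmod4$ and hence $(\alpha,\beta)=\pm2$, i.e. $\beta=\pm\alpha$. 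So $g\alpha=\pm\alpha$ for all roots; the $\pm1$-eigenspaces of $g$ are orthogonal and each root lies in one of them, whence by irreducibility of the $\mathrm E_8$ root system $g=\pm1$. Since $-1$ acts trivially modulo $2$, we conclude $K_8=\{\pm1\}$, of order $2$.

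Finally, for $n\le7$ I intersect the chain: $K_n\subseteq K_8\cap\Gamma_0^{(n)}=\{\pm1\}\cap\Gamma_0^{(n)}$. The subgroup $\Gamma_0^{(n)}$ fixes pointwise the nonzero orthogonal complement of $\langle\alpha_0,\dots,\alpha_{n-1}\rangle$ in $Q$, whereas $-1$ negates it; hence $-1\notin\Gamma_0^{(n)}$ for $n<8$, so $K_n=1$ and $\Gamma(2)=\Gamma_1$. The main obstacle is precisely the base case: both the arithmetic identity $\Z^{8,1}=Q\perp\langle v_8\rangle$ and, more essentially, the determination $K_8=\{\pm1\}$. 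Anchoring at $n=8$ is what keeps the argument clean, since it bypasses the delicate even-index gluing at $n=3,5,7$, where $Q\perp\langle v_n\rangle$ sits with even index in $\Z^{n,1}$ so that glue vectors could a priori create mod-$2$ coincidences, and where for $n=7$ the group $\Gamma_0$ even contains an element acting as $-1$ on its root span. Monotonicity together with the single fact that $-1\in W(\mathrm E_8)$ never descends into a proper $\Gamma_0^{(n)}$ dissolves all of these cases at once.
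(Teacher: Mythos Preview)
Your proof is correct and shares the paper's opening reduction: both use $\Gamma=\Gamma_1\rtimes\Gamma_0$ together with $\Gamma_1\subseteq\Gamma(2)$ to reduce the question to computing $K_n=\Gamma_0\cap\Gamma(2)$, and both handle $n=8$ by the same argument (congruence mod~$2$ on roots forces $g\alpha=\pm\alpha$, then irreducibility of $\mathrm E_8$ gives $g=\pm1$). The difference is in how the cases $n\le7$ are dispatched. The paper treats each $n$ directly: it passes to the weight lattice $P_0$ of the root system of $\Gamma_0$ and uses that $P_0$ contains nonzero vectors $\lambda$ of norm strictly less than $2$, so that $(w\lambda-\lambda)^2<8$ by the triangle inequality; since the only element of $2Q_0$ with norm below $8$ is zero, $w$ fixes all such $\lambda$, and as they span $P_0$ one gets $w=1$. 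You instead build a monotone chain $K_2\subseteq\cdots\subseteq K_8$ through the inclusions $\Gamma_0^{(n)}\hookrightarrow\Gamma_0^{(8)}=W(\mathrm E_8)$, so that each $K_n$ already sits inside $\{\pm1\}$, and then rule out $-1$ for $n<8$ because $\Gamma_0^{(n)}$ fixes a nonzero orthogonal complement inside the $\mathrm E_8$ lattice. Your route is a bit more economical---one anchor computation at $n=8$ settles everything and sidesteps the glue issues at odd $n$ that you yourself flag---while the paper's route is self-contained at each $n$ and makes visible exactly why the argument breaks at $n=8$ (the minimal nonzero norm in $P_0=Q_0$ jumps to $2$). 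One small notational wrinkle: for $n=2$ the long simple root is only $\alpha_1$ (your phrase ``$\alpha_0,\dots,\alpha_{n-1}$'' is slightly off there, since $\alpha_0=e_0-e_1-e_2$ has norm $1$), but this does not affect the argument.
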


\begin{proof}
Since $\Gamma=\Gamma_1\rtimes\Gamma_0$ we have to show that
$\Gamma_0\cap\Gamma(2)$ is the trivial group for $2\leq n\leq7$ and has
order $2$ for $n=8$. For $n=2$ the sublattice 
$L_0=\mathbb{Z}v_0+\mathbb{Z}v_2$ has discriminant $d=2$ while for 
$3\leq n\leq7$ the sublattice $L_0=\mathbb{Z}v_n$ has discriminant 
$d=9-n$. Hence the orthogonal complement $Q_0$ of $L_0$ in 
$\mathbb{Z}^{n,1}$ is just the root lattice of the finite Coxeter group
$\Gamma_0$ (of type $\mathrm{A}_1,\mathrm{A}_1\sqcup\mathrm{A}_2,
\mathrm{A}_4,\mathrm{D}_5,\mathrm{E}_6,\mathrm{E}_7,\mathrm{E}_8$ 
respectively). Indeed, that root lattice is contained in $Q_0$ and
has the correct discriminant $d$. The corresponding (rational) weight
lattice $P_0$, by definition the dual lattice of $Q_0$, is the orthogonal
projection of $\mathbb{Z}^{n,1}$ on $\mathbb{Q}\otimes Q_0$.

Now $w\in\Gamma_0$ also lies in $\Gamma(2)$ if and only if 
$w\lambda-\lambda\in2Q_0$ for all $\lambda\in P_0$. It is well known
that for $2\leq n\leq 7$ the set $\{\lambda\in P_0;\lambda^2<2\}$
is nonempty and spans $P_0$. For all these $\lambda$ the norm 
$(w\lambda-\lambda)^2$ is smaller than $8$ by the triangle inequality. 
But the only vector in $2Q_0$ of norm smaller than $8$ is the null 
vector. Hence $w=1$ and so $\Gamma_0\cap\Gamma(2)$ is the trivial group.
For $n=8$ the elements of minimal positive norm in the lattice
$P_0=Q_0$ of type $\mathrm{E}_8$ form the root system $R(\mathrm{E}_8)$ 
of type $E_8$ of vectors of norm $2$. If $(w-1)\alpha\in2Q_0$ for 
$w\in\Gamma_0$ and $\alpha\in R(\mathrm{E}_8)$ then
either $(w-1)\alpha$ has norm smaller than $8$ and $w\alpha=\alpha$, 
or $(w-1)\alpha$ has norm $8$ and $w\alpha=-\alpha$. 
If $w\alpha=\pm\alpha$ for all $\alpha\in R(\mathrm{E}_8)$ then one
easily concludes that $w=\pm1$. Hence $\Gamma_0\cap\Gamma(2)=\{\pm1\}$
has order $2$ for $n=8$.
\end{proof}

For $n=2,3,4$ the Coxeter diagram of the reflection group $\Gamma_1=\Gamma(2)$
has the following explicit description.

\begin{theorem}\label{Coxeter diagram theorem}
The Coxeter diagrams of $\Gamma$ on the left and of $\Gamma(2)$ on the right 
are given by 
\begin{center}
\psset{unit=1mm}
\begin{pspicture}*(-60,-8)(20,8)
\pscircle(-50,0){1}
\pscircle(-40,0){1}
\pscircle(-30,0){1}

\pscircle(-10,0){1}
\pscircle(0,0){1}
\pscircle(10,0){1}

\psline(-49.3,0.5)(-40.7,0.5)
\psline(-49.3,-0.5)(-40.7,-0.5)
\psline(-39,0)(-31,0)
\psline(-45.5,1.5)(-44,0)
\psline(-45.5,-1.5)(-44,0)

\psline(-9,0)(-1,0)
\psline(1,0)(9,0)

\rput(-50,-4){$1$}
\rput(-40,-4){$2$}
\rput(-30,-4){$0$}
\rput(-35,2){$\infty$}

\rput(-10,-4){$1$}
\rput(0,-4){$3$}
\rput(10,-4){$2$}

\end{pspicture}
\end{center}
for $n=2$, and
\begin{center}
\psset{unit=1mm}
\begin{pspicture}*(-25,-12)(65,12)

\pscircle(-20,0){1}
\pscircle(-10,0){1}
\pscircle(0,0){1}
\pscircle(10,0){1}

\pscircle(30,5){1}
\pscircle(40,5){1}
\pscircle(50,5){1}
\pscircle(30,-5){1}
\pscircle(40,-5){1}
\pscircle(50,-5){1}

\psline(-19,0)(-11,0)
\psline(-9.3,0.5)(-0.7,0.5)
\psline(-9.3,-0.5)(-0.7,-0.5)
\psline(0.7,0.5)(9.3,0.5)
\psline(0.7,-0.5)(9.3,-0.5)
\psline(-5.5,1.5)(-4,0)
\psline(-5.5,-1.5)(-4,0)
\psline(5.5,1.5)(4,0)
\psline(5.5,-1.5)(4,0)

\psline(31,5)(39,5)
\psline(41,5)(49,5)
\psline(50,4)(50,-4)
\psline(30,4)(30,-4)
\psline(31,-5)(39,-5)
\psline(41,-5)(49,-5)

\rput(-20,-4){$1$}
\rput(-10,-4){$2$}
\rput(0,-4){$3$}
\rput(10,-4){$0$}

\rput(30,9){$1$}
\rput(40,9){$4$}
\rput(50,9){$2$}
\rput(50,-9){$5$}
\rput(40,-9){$3$}
\rput(30,-9){$6$}

\end{pspicture}
\end{center}
for $n=3$, and 
\begin{center}
\psset{unit=1mm}
\begin{pspicture}*(-70,-22)(30,22)

\pscircle(-60,0){1}
\pscircle(-50,0){1}
\pscircle(-40,0){1}
\pscircle(-30,0){1}
\pscircle(-40,10){1}

\psline(-51,0)(-59,0)
\psline(-41,0)(-49,0)
\psline(-40,1)(-40,9)
\psline(-30.7,0.5)(-39.3,0.5)
\psline(-30.7,-0.5)(-39.3,-0.5)
\psline(-35.5,1.5)(-34,0)
\psline(-35.5,-1.5)(-34,0)

\rput(-60,-4){$1$}
\rput(-50,-4){$2$}
\rput(-40,-4){$3$}
\rput(-30,-4){$4$}
\rput(-43,10){$0$}

\psline(20,0)(6.18,19.02)
\psline(6.18,19.02)(-16.18,11.755)
\psline(-16.18,11.755)(-16.18,-11.755)
\psline(-16.18,-11.755)(6.18,-19.02)
\psline(6.18,-19.02)(20,0)

\psline(10,0)(20,0)
\psline(3.09,9.51)(6.18,19.02)
\psline(-8.09,5.88)(-16.18,11.755)
\psline(-8.09,-5.88)(-16.18,-11.755)
\psline(3.09,-9.51)(6.18,-19.02)

\psline(10,0)(-8.09,5.88)
\psline(-8.09,5.88)(3.09,-9.51)
\psline(3.09,-9.51)(3.09,9.51)
\psline(3.09,9.51)(-8.09,-5.88)
\psline(-8.09,-5.88)(10,0)

\pscircle[fillstyle=solid,fillcolor=white](10,0){1}
\pscircle[fillstyle=solid,fillcolor=white](20,0){1}
\pscircle[fillstyle=solid,fillcolor=white](3.09,9.51){1}
\pscircle[fillstyle=solid,fillcolor=white](6.18,19.02){1}
\pscircle[fillstyle=solid,fillcolor=white](-8.09,5.88){1}
\pscircle[fillstyle=solid,fillcolor=white](-16.18,11.755){1}
\pscircle[fillstyle=solid,fillcolor=white](-8.09,-5.88){1}
\pscircle[fillstyle=solid,fillcolor=white](-16.18,-11.755){1}
\pscircle[fillstyle=solid,fillcolor=white](3.09,-9.51){1}
\pscircle[fillstyle=solid,fillcolor=white](6.18,-19.02){1}

\rput(9.18,20.02){$1$}
\rput(-7,9){$2$}
\rput(-7,-9){$3$}
\rput(9.18,-20.02){$4$}
\rput(-19,14.755){$12$}
\rput(7,9.51){$13$}
\rput(24,0){$14$}
\rput(12,3){$23$}
\rput(7,-9.51){$24$}
\rput(-19,-14.755){$34$}

\end{pspicture}
\end{center}
for $n=4$ respectively. All edges of the Coxeter diagrams of $\Gamma(2)$ 
have mark $\infty$, but for simplicity and because of the next theorem
these are left out in the drawn diagrams. 
The last diagram for $n=4$ with $10$ nodes is the so called 
Petersen graph and will be denoted $\mathrm{I}_{10}$.
The automorphism groups $\Gamma_0\cong\Gamma/\Gamma(2)$ of these Coxeter
diagrams of $\Gamma(2)$ are equal to $S_2,S_2\times S_3,S_5$ as the Weyl 
groups of type $\mathrm{A}_1,\mathrm{A}_1\sqcup\mathrm{A}_2,\mathrm{A}_4$
respectively. 
\end{theorem}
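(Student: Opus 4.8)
The left-hand diagrams for $\Gamma=\Or^+(\mathbb{Z}^{n,1})$ are immediate from Theorem~\ref{Vinberg theorem} specialised to $n=2,3,4$: one reads the marks off the Gram matrix $\bigl((\alpha_i,\alpha_j)\bigr)$ of the simple roots, using that $(\alpha_i,\alpha_j)=0$ gives no edge, $\lvert(\alpha_i,\alpha_j)\rvert/\sqrt{\alpha_i^2\alpha_j^2}=\cos(\pi/m)$ an edge marked $m$, and $\lvert(\alpha_i,\alpha_j)\rvert\geq\sqrt{\alpha_i^2\alpha_j^2}$ an edge marked $\infty$, and only $m=3,4,\infty$ occur. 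So the substance is the right-hand diagrams, i.e.\ the Coxeter diagram of $\Gamma(2)=\Gamma_1$ (the two coinciding for $n\leq 7$ by Theorem~\ref{ERT theorem}) whose fundamental chamber is the Gosset polytope $P=\cup_{w\in\Gamma_0}wD$. By the discussion preceding the theorem $P$ is right-angled and any two of its walls are either orthogonal or parallel, so the nodes of the diagram are the facets of $P$ and two of them are joined by an edge — necessarily marked $\infty$ — exactly when their supporting hyperplanes do not meet inside $\mathbb{B}_r$. The plan is then to (i) list the facets of $P$, (ii) determine which pairs are parallel, and (iii) identify $\Gamma_0$ with the automorphism group of the resulting graph.

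For (i): since $P$ is the star of $D_0$ and $\Gamma_0$ permutes its walls, every wall of $P$ is $w\beta^{\perp}\cap P$ for some $w\in\Gamma_0$ and some short (norm one) simple root $\beta$ cutting a wall of $D$ that does not meet the relative interior of $D_0$ — the unique root $\beta=\alpha_n=e_n$ for $n=3,4$, and the two roots $\alpha_0=e_0-e_1-e_2$ and $\alpha_2=e_2$ for $n=2$. Hence the facet normals of $P$ are, up to sign, the $\Gamma_0$-orbit of these short roots, and I would compute this orbit using the explicit action of the generating reflections of $\Gamma_0$ on $e_0,\dots,e_n$: permutations of $e_1,\dots,e_n$, together with $s_{\alpha_0}$ when $n\geq3$, which fixes $e_j$ for $j\geq4$, sends $e_0$ to $2e_0-e_1-e_2-e_3$, and sends $e_i$ to $e_0-\sum_{j\in\{1,2,3\}\setminus\{i\}}e_j$ for $1\leq i\leq3$. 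For $n=2$ the orbit is $\{e_1,e_2,\,e_0-e_1-e_2\}$; for $n=3$ it is $\{e_1,e_2,e_3\}\cup\{e_0-e_i-e_j:\{i,j\}\subset\{1,2,3\}\}$; for $n=4$ it is $\{e_1,e_2,e_3,e_4\}\cup\{e_0-e_i-e_j:\{i,j\}\subset\{1,2,3,4\}\}$, and I label the corresponding facets $i$ and $ij$.

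For (ii): a direct computation of the pairwise products shows that $(\alpha,\beta)\in\{0,\pm1\}$ for any two facet normals, the value $0$ giving orthogonal walls (no edge) and $\pm1$ giving asymptotically parallel walls (edge marked $\infty$) — no ultraparallel pair occurs, consistent with $P$ being right-angled. For $n=2$ this gives the path $1-3-2$ with middle node $3=e_0-e_1-e_2$; for $n=3$ the hexagon $1-4-2-5-3-6-1$ with $4=e_0-e_1-e_2$, $5=e_0-e_2-e_3$, $6=e_0-e_1-e_3$, which is the affine diagram $\tilde{\mathrm{A}}_5$; and for $n=4$ the graph on the $i$ and $ij$ in which $i\sim jk$ exactly when $i\in\{j,k\}$, $ij\sim kl$ exactly when $\{i,j\}\cap\{k,l\}=\emptyset$, and $i\not\sim j$. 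This graph is $3$-regular on $10$ vertices with girth $5$, hence is the Petersen graph $\mathrm{I}_{10}$; one may also write down an explicit isomorphism with the Kneser graph $K(5,2)$ and check that it reproduces the drawn diagram under $i\leftrightarrow e_i^{\perp}$, $ij\leftrightarrow(e_0-e_i-e_j)^{\perp}$.

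For (iii): the group $\Gamma_0$ acts on the diagram $\Delta$ of $\Gamma(2)$ by permuting facets of $P$ and preserving the bilinear form, giving a map $\Gamma_0\to\mathrm{Aut}(\Delta)$. It is injective: an element fixing every facet fixes each facet normal up to sign; these span $\mathbb{R}^{n,1}$; and an element of $\Or^+(\mathbb{R}^{n,1})$ that scales each of a spanning set by $\pm1$ must be the identity, since $-1$ reverses the time orientation. As $\lvert\mathrm{Aut}(\Delta)\rvert=2,12,120$ for the path, the hexagon and the Petersen graph, and $\lvert\Gamma_0\rvert=2,12,120$ for $W(\mathrm{A}_1)$, $W(\mathrm{A}_1\sqcup\mathrm{A}_2)$, $W(\mathrm{A}_4)$, the map is an isomorphism; combined with $\Gamma_0\cong\Gamma/\Gamma(2)$, coming from the semidirect product $\Gamma=\Gamma_1\rtimes\Gamma_0$ and Theorem~\ref{ERT theorem}, this gives the asserted identification of automorphism groups. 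I expect the one point needing real care to be the claim in step (i) that the $\Gamma_0$-orbit of the short simple root(s) is exactly the set of facet normals of $P$, with no collapses or omissions; this rests on the structural facts about $P$ established before the theorem — that all its walls are copies of $P^{n-1}$ and that its vertex set is the union of the two $\Gamma_0$-orbits $\Gamma_0v_0$ and $\Gamma_0v_1$. Identifying the $10$-vertex graph with the Petersen graph is by contrast routine.
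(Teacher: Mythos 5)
Your proposal is correct and follows essentially the same route as the paper: identify the walls of the Gosset polytope $P$ with the $\Gamma_0$-orbit of the short simple root(s), obtaining the normals $e_i$ and $e_0-e_i-e_j$, and read the diagram off the Gram matrix (the paper phrases this as computing the Coxeter generators $r_i=ws_4w^{-1}$ of $\Gamma(2)$ indexed by $S_5/(S_2\times S_3)$, which is the same computation). Your step (iii) is actually more than the paper offers, since the paper merely asserts the identification of the automorphism groups.

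One step in (iii) would fail as literally written: the claim that an element of $\Or^+(\mathbb{R}^{n,1})$ scaling each member of a spanning set by $\pm1$ must be the identity is false in general --- the reflection $s_{e_1}$ lies in $\Or^+(\mathbb{R}^{n,1})$ and scales the spanning set $e_0,e_1,\dots,e_n$ by $\pm1$. The point is not that $-1$ reverses the time orientation on all of $\mathbb{R}^{n,1}$, but that the linear relations among the facet normals force all the signs to agree: if $ge_i=\epsilon_ie_i$ and $g(e_0-e_i-e_j)=\epsilon_{ij}(e_0-e_i-e_j)$, then comparing the two resulting expressions for $ge_0$ coming from disjoint pairs $\{i,j\}$ and $\{k,l\}$ gives $\epsilon_i=\epsilon_j=\epsilon_{ij}=\epsilon_{kl}=\epsilon_k=\epsilon_l$, whence $g=\pm1$ on $\mathbb{R}^{n,1}$ and only then does the time-orientation argument exclude $-1$. (Alternatively, for $n=4$ the kernel of $S_5\to\mathrm{Aut}(\mathrm{I}_{10})$ is a normal subgroup contained in the facet stabilizer $S_2\times S_3$, hence trivial.) With this repair the order count $2,12,120$ completes the argument as you intend.
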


\begin{proof}
Let $s_i$ for $i=0,1,\cdots,n$ be the simple reflections of the group 
$\Gamma$ as numbered in Theorem{\;\ref{Vinberg theorem}}. We shall treat 
the cases $n=2,3,4$ separately.

For $n=2$ the fundamental domain $D$ is a hyperbolic triangle with 
angles $\{\pi/4,0,\pi/2\}$ at the vertices $v_0,v_1,v_2$ respectively. 
The Gosset polytope $P=D\cup s_1D$ is a hyperbolic triangle with angles 
$\{\pi/2,0,0\}$ at the vertices $v_0,v_1,s_1v_1$. It is a fundamental 
domain for the action of the Coxeter group $\Gamma(2)$ with simple 
generators 
\[ r_1=s_1s_2s_1,r_2=s_2,r_3=s_0 \]
whose Coxeter diagram is the $\mathrm{A}_3$ diagram with marks $\infty$ 
on the edges rather than the usual mark $3$.

For $n=3$ the Gosset polytope $P$ is a double tetrahedron $P=T\cup s_0T$ 
with hyperbolic tetrahedron $T$ the union over $wD$ with 
$w\in S_3=\langle s_1,s_2\rangle$ and $\{v_0,v_1,s_1v_1,s_2s_1v_1\}$ as 
the set of vertices. The Coxeter diagram of $T$ is the $\mathrm{D}_4$ 
diagram with marks $4$ on the edges rather than the usual mark $3$. 
The reflection $s_0$ corresponds to the central node, and the reflections 
\[ r_1=s_1r_2s_1,r_2=s_2s_3s_2,r_3=s_3 \]
correspond to the three extremal nodes. The polytope $P$ is the fundamental 
domain for the action of the Coxeter group $\Gamma(2)$ with simple 
generators 
\[ r_1=s_1r_2s_1,r_2=s_2s_3s_2,r_3=s_3,
r_4=s_0r_3s_0,r_5=s_0r_1s_0,r_6=s_0r_2s_0 \]
whose Coxeter diagram is the $\tilde{\mathrm{A}}_5$ diagram with marks 
$\infty$ on the edges rather than the usual mark $3$. 

For $n=4$ the Gosset polytope $P$ is the union $\cup_w wD$ over 
$w\in\Gamma_0$ with $\Gamma_0=S_5$ the group generated by the 
reflections $s_0,s_1,s_2,s_3$ in the long simple roots. The vertex 
$v_4$ of $D$ is interior point of $P$ and $\Gamma_0$ is the symmetry 
group of $P$ generated by the reflections in the mirrors through $v_4$. 
The group $\Gamma(2)$ is generated by the simple reflections 
\[ r_i=ws_4w^{-1} \]
with $w\in S_5$ and $i\in I=S_5/(S_2\times S_3)$ the left coset of $w$
for the centralizer of $s_4$ in $S_5$, which is just generated by 
$s_0,s_1,s_2$. The cardinality of $I$ is equal to $10$ and the Coxeter
diagram of $P$ is the Petersen graph $\mathrm{I}_{10}$, but with the edges 
marked $\infty$ rather than $3$. Indeed, by Theorem{\;\ref{Vinberg theorem}}
\[ \alpha_0=e_0-e_1-e_2-e_3,
\alpha_1=e_1-e_2,\alpha_2=e_2-e_3,
\alpha_3=e_3-e_4,\alpha_4=e_4 \]
is the basis of simple roots for $D$. 
Hence both $\beta_3=s_3(\alpha_4)=e_3$
and $\beta_{12}=s_0(\beta_3)=e_0-e_1-e_2$
are simple roots for $P$. Using the action of $\langle s_1,s_2,s_3\rangle$ 
we see that
\[ \beta_i=e_i,\beta_{jk}=e_0-e_j-e_k \]
are simple roots of $P$ for $1\leq i\leq4$ and $1\leq j<k\leq4$. 
Because $P$ has $10$ simple roots these are all simple roots of $P$.
The Gosset polytope $P$ has $5$ actual vertices, which are the 
transforms under $\Gamma_0$ of $v_0$. Likewise it has $5$ ideal 
vertices, which are the transforms under $\Gamma_0$ of the cusp 
$v_1$ of $D$. 
\end{proof}

The Petersen graph was described by Petersen in 1898 \cite{Petersen 1898},
but was in fact discovered before in 1886 by Kempe \cite{Kempe 1886}.

\begin{theorem}\label{odd presentation theorem}
Let $\Gamma=\Or^+(\mathbb{Z}^{n,1})$ and let $\Gamma(2)$ and $\Gamma(3)$ 
be the principal congruence subgroups of level $2$ and level $3$ 
respectively for $n=2,3,4$. Then the group $\Gamma/\Gamma(3)$ is equal to 
\[ \PGO_3(3)=S_4=W(\mathrm{A}_3),\PGO_4(3)=S_6=W(\mathrm{A}_5),
\PGO_5(3)=W(\mathrm{E}_6) \]
respectively. If we denote by $r_i$ the Coxeter generators of $\Gamma(2)$ 
in the notation of Theorem{\;\ref{Coxeter diagram theorem}} 
then $t_i=r_i\Gamma(3)$ are generators for $\Gamma/\Gamma(3)$.
In fact $\Gamma/\Gamma(3)$ has a presentation with generators the involutions 
$t_i$ and with braid and deflation relations. The braid relations amount to
\[ t_it_j=t_jt_i\;,\;t_it_jt_i=t_jt_it_j \]
if the nodes with index $i$ and $j$ are disconnected and connected respectively, 
and so $\Gamma/\Gamma(3)$ is a factor group of the Coxeter group associated to 
the simply laced Coxeter diagrams 
$\mathrm{A}_3,\tilde{\mathrm{A}}_5,\mathrm{P}_{10}$
of Theorem{\;\ref{Coxeter diagram theorem}}. The deflation relations mean 
that for each subdiagram of type $\tilde{\mathrm{A}}_5$, also called 
a free hexagon, the translation lattice of the affine Coxeter group 
$W(\tilde{\mathrm{A}}_5)$ dies in $\Gamma/\Gamma(3)$.
\end{theorem}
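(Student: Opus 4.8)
The plan is to prove the theorem in three layers. First, I would establish the group identifications $\Gamma/\Gamma(3)\cong\PGO_{n+1}(3)$ and the identification of these with $S_4$, $S_6$, $W(\mathrm E_6)$ for $n=2,3,4$. For this, note that reduction mod $3$ gives a map $\Or^+(\Z^{n,1})\to\Or_{n+1}(\F_3)$ whose kernel is exactly $\Gamma(3)$; surjectivity follows from strong approximation (or can be checked directly in these small ranks). One then divides by the center $\{\pm1\}$ to land in $\PGO_{n+1}(3)$, which requires checking that $-1\notin\Gamma(3)$, i.e.\ $-1\not\equiv1\pmod3$, trivially true. The isomorphisms $\PGO_3(3)\cong S_4$, $\PGO_4(3)\cong S_6$, $\PGO_5(3)\cong W(\mathrm E_6)$ are classical facts about small orthogonal groups over $\F_3$ (the last is the exceptional isomorphism underlying the $27$ lines); I would cite these rather than reprove them.

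Second, I would set up the presentation machinery. By Theorem~\ref{ERT theorem}, $\Gamma(2)=\Gamma_1$ is a Coxeter group with Coxeter generators the reflections $r_i$ in the walls of the Gosset polytope $P$, and by Theorem~\ref{Coxeter diagram theorem} its Coxeter diagram is $\mathrm A_3$, $\tilde{\mathrm A}_5$, $\mathrm P_{10}$ with all edge marks $\infty$. Since $\Gamma/\Gamma(3)\cong\Gamma(2)/\Gamma(6)$ (because $\Gamma=\Gamma(2)\rtimes\Gamma_0$ and $\Gamma_0\cap\Gamma(3)$ can be shown trivial, or more simply because $\Gamma(3)\subset\Gamma(2)$ with $\Gamma/\Gamma(2)\cong\Gamma_0$ splitting off the $\Gamma_0$-part), the $t_i=r_i\Gamma(3)$ generate $\Gamma/\Gamma(3)$. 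They are involutions because the $r_i$ are; and whenever two walls of $P$ are orthogonal (equivalently the nodes $i,j$ are disconnected in the simply-laced diagram) the $r_i,r_j$ commute, hence so do the $t_i,t_j$. For connected nodes, I must verify the braid relation $t_it_jt_i=t_jt_it_j$; here the mark is $\infty$ upstairs, so $(r_ir_j)$ has infinite order, but $r_ir_j$ is a parabolic (unipotent-type) element whose reduction mod $3$ has order $3$ precisely when the corresponding parabolic translation cubes to the identity in $\Gamma(2)/\Gamma(6)$. This is the computation I would carry out explicitly on the lattice $\Z^{n,1}$: pick two norm-$1$ roots $\alpha,\beta$ with $(\alpha,\beta)=\pm1$ spanning a degenerate plane, check $(s_\alpha s_\beta-1)^2\equiv0$ in a suitable sense, and deduce $(r_ir_j)^3\in\Gamma(6)$, hence $(t_it_j)^3=1$.

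Third — and this is the crux — I would prove the deflation relations and that, for $n\le4$, the braid plus deflation relations are \emph{complete}. The deflation relation says that for any $\tilde{\mathrm A}_5$-subdiagram the translation sublattice of the affine Weyl group $W(\tilde{\mathrm A}_5)$ maps to $1$ in $\Gamma/\Gamma(3)$. For $n=3$ this is immediate: the whole diagram is $\tilde{\mathrm A}_5$ and one computes that the six standard translations generating the translation lattice of $W(\tilde{\mathrm A}_5)$ lie in $\Gamma(3)$ — a direct matrix check on $\Z^{3,1}$, using that a translation in the affine Coxeter group of $\Gamma_1$ acts unipotently and its mod-$3$ reduction is trivial because the relevant off-diagonal entries are divisible by $3$. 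For general $n$ the relations valid in dimension $n$ remain valid in dimension $n+1$ by the natural embedding $\Z^{n,1}\hookrightarrow\Z^{n+1,1}$, so deflation for every $\tilde{\mathrm A}_5$-subdiagram follows from the $n=3$ case. The completeness for $n\le4$ is where the real work lies. For $n=2$ one simply checks that $W(\mathrm A_3)/N$ with the extra braid relations recovers $S_4$ — essentially the Coxeter presentation, trivial. For $n=3$ one shows $W(\tilde{\mathrm A}_5)$ modulo deflation of its unique free hexagon is $S_6$: the quotient is $W(\tilde{\mathrm A}_5)/(\text{translations})\cong W(\mathrm A_5)=S_6$, which is immediate from the structure of affine Weyl groups. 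For $n=4$ — Simons's presentation of $W(\mathrm E_6)$ — I would follow the ``easy exercise with the Petersen graph'' alluded to after the theorem: let $G=W(\mathrm P_{10})/N$ be the abstract group defined by the involutions $t_i$, the braid relations, and deflation of all $\tilde{\mathrm A}_5$-subgraphs (the free hexagons in the Petersen graph, of which there are exactly $\binom{10}{?}$-many in a combinatorially transparent pattern); produce the surjection $G\twoheadrightarrow W(\mathrm E_6)$ from the two preceding layers, and then bound $|G|\le|W(\mathrm E_6)|=51840$ by a coset-enumeration-style argument exploiting the $S_5$-symmetry of the Petersen graph and the hexagon relations to collapse words. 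I expect this last bounding step to be the main obstacle: showing $|G|\le 51840$ purely from the relations requires a genuine (if finite) combinatorial argument, and I would present it via the symmetry group $\Gamma_0=S_5$ acting on the ten generators, reducing the verification to a manageable number of cases rather than a blind enumeration.
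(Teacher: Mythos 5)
Your proposal follows essentially the same route as the paper: identify $\Gamma/\Gamma(3)$ with $\PGO_{n+1}(3)$ and hence with $S_4$, $S_6$, $W(\mathrm{E}_6)$ by citation; pass to $\Gamma(2)/\Gamma(6)$ to get the generators $t_i$ from the Coxeter generators of $\Gamma(2)$; verify the braid relations by a lattice computation; obtain deflation from the $n=3$ hexagon; and then address completeness. Two points of comparison are worth recording. First, for the braid relation the paper avoids the unipotency detour (and your slightly off claim that $(s_\alpha s_\beta-1)^2\equiv 0$; in fact $(s_\alpha s_\beta-1)^2\lambda=-4(\lambda,\alpha+\beta)(\alpha+\beta)\neq0$, only the cube vanishes) by the one-line identity $(s_{\beta}s_{\alpha}s_{\beta}-s_{\alpha}s_{\beta}s_{\alpha})\lambda=6(\lambda,\alpha)\alpha-6(\lambda,\beta)\beta$ for norm-one roots with $(\alpha,\beta)=-1$, which gives the relation modulo $\Gamma(3)$ directly; your route does reach the same conclusion since $(g-1)^3=0$ forces $g^3\equiv1\pmod 3$. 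Second, the step you flag as the main obstacle --- bounding $|W(\mathrm{P}_{10})/N|$ by $51840$ --- is not carried out in the paper either: completeness for $n=4$ is attributed to Simons and described in the introduction as an easy exercise with the Petersen graph, and the paper instead supplements the proof with an explicit realization of the ten Petersen roots inside $R(\mathrm{E}_6)$ via the outer automorphism of $S_6$ (using that the alternating sum of the roots of a free hexagon vanishes). So your plan is sound and matches the paper's; if you want a self-contained argument you would indeed have to supply the coset enumeration you sketch, but the paper itself settles for the citation.
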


\begin{proof}
It is well known that $\mathrm{PGO}_{n+1}$ is equal to
$W(\mathrm{A}_3),W(\mathrm{A}_5),W(\mathrm{E}_6)$ for $n=2,3,4$ 
respectively \cite{ATLAS 1985}.
Clearly $\Gamma/\Gamma(3)\cong\Gamma(2)/\Gamma(6)$, and so 
$\Gamma/\Gamma(3)$ is a factor group of the Coxeter group $\Gamma(2)$
with Coxeter diagram given by Theorem{\;\ref{Coxeter diagram theorem}}
with all edges marked $\infty$.

If $\alpha,\beta\in\mathbb{Z}^{n,1}$ are norm $1$ roots with 
$(\alpha,\beta)=-1$ then a straightforward computation yields
\[ (s_{\beta}s_{\alpha}s_{\beta}-s_{\alpha}s_{\beta}s_{\alpha})\lambda=
6(\lambda,\alpha)\alpha-6(\lambda,\beta)\beta \]
for all $\lambda\in\mathbb{Z}^{n,1}$, which in turn implies 
$s_{\beta}s_{\alpha}s_{\beta}\equiv s_{\alpha}s_{\beta}s_{\alpha}$
modulo $\Gamma(3)$. Hence $\Gamma/\Gamma(3)$ is a factor group of
the Coxeter group with the simply laced Coxeter diagrams of 
Theorem{\;\ref{Coxeter diagram theorem}}, because the marks $\infty$  
become a $3$ and are deleted. For $n=2$ we recover the Coxeter 
presentation of $S_4=W(\mathrm{A}_3)$.

For $n=3$ the group $\Gamma/\Gamma(3)=S_6$ is the factor group of the 
affine Coxeter group $W(\tilde{\mathrm{A}}_5)$ by its translation lattice.
Indeed, in the notation of Theorem{\;\ref{Coxeter diagram theorem}} and 
its proof we have
\[ r_1=s_{e_1},r_2=s_{e_2},r_3=s_{e_3},
r_4=s_{e_0-e_1-e_2},
r_5=s_{e_0-e_2-e_3},
r_6=s_{e_0-e_1-e_3} \]
and the relation
\[ t_1t_4t_2t_5t_3t_6t_3t_5t_2t_4=1 \]
in $\Gamma/\Gamma(3)$ follows by direct inspection. Since the element on 
the left side in the affine Coxeter group $W(\tilde{\mathrm{A}}_5)$ is a 
translation over a coroot this shows that the translation lattice dies 
in $\Gamma/\Gamma(3)$. This relation is also called deflation of the 
free hexagon.

For $n=4$ we recover a presentation for the group $W(\mathrm{E_6})$
as found by Christopher Simons \cite{Simons 2005}.
It is the factor group of the Coxeter group $W(\mathrm{P}_{10})$ of
the Petersen graph $\mathrm{P}_{10}$ by deflation of all free hexagons.
This somewhat odd presentation for $W(\mathrm{E}_6)$ can be seen in 
the usual $\mathrm{E}_6$ diagram 
\begin{center}
\psset{unit=1mm}
\begin{pspicture}*(-30,-8)(30,15)

\pscircle(-20,0){1}
\pscircle(-10,0){1}
\pscircle(0,0){1}
\pscircle(10,0){1}
\pscircle(20,0){1}
\pscircle(0,10){1}

\psline(-19,0)(-11,0)
\psline(-9,0)(-1,0)
\psline(1,0)(9,0)
\psline(11,0)(19,0)
\psline(0,1)(0,9)

\rput(-3,10){$6$}
\rput(-20,-4){$1$}
\rput(-10,-4){$2$}
\rput(0,-4){$3$}
\rput(10,-4){$4$}
\rput(20,-4){$5$}

\end{pspicture}
\end{center}
as follows. The group generated by the simple reflections $s_i$ for
$1\leq i\leq5$ is the symmetric group $S_6$. The orbit under the symmetric
group $S_5$ generated by $s_i$ for $1\leq i\leq4$ of the root $\alpha_6$
has cardinality $10$ and the reflections in these $10$ roots generate the
Weyl group $W(\mathrm{D}_5)$ generated by the reflections $s_1,s_2,s_3,s_4,s_6$.
However $S_6$ has an outer automorphism \cite{Todd 1945}, and the image of $S_5$ under this
automorphism is denoted $\tilde{S}_5$. The orbit under the twisted $\tilde{S}_5$
of the root $\alpha_6$ has again cardinality $10$, and the Gram matrix of this 
set of $10$ roots is the incidence matrix of the Petersen graph, so 
$(\alpha,\beta)=0,1,2$ if $\alpha$ and $\beta$ are disconnected, or are 
connected by an edge, or are equal respectively. 

An explicit way of understanding that a set of $10$ vectors with such a
Gram matrix exists in the root system $R(\mathrm{E}_6)$ goes as follows. 
Denote by $\{\alpha_j\}$ the basis of simple roots of $R(\mathrm{E}_6)$ 
numbered as in the above diagram. Then we take
\[ \beta_{13}=-\alpha_1,\beta_1=\alpha_2,\beta_{14}=-\alpha_3,
\beta_4=\alpha_4,\beta_{34}=-\alpha_5,\beta_{23}=\alpha_6 \]
in the numbering of nodes of $\mathrm{P}_{10}$ as in 
Theorem{\;\ref{Coxeter diagram theorem}}. In turn this implies
\begin{eqnarray*}
\beta_3 &=& -\alpha_1-\alpha_2-\alpha_3-\alpha_4-\alpha_5 \\
\beta_{24} &=& \alpha_2+2\alpha_3+2\alpha_4+\alpha_5+\alpha_6 \\
\beta_2 &=& \alpha_1+2\alpha_2+3\alpha_3+2\alpha_4+\alpha_5+2\alpha_6 \\
\beta_{12} &=& \alpha_1+2\alpha_2+2\alpha_3+\alpha_4+\alpha_6
\end{eqnarray*}
by looking for suitable free hexagons, as the alternating sum
of the roots of a free hexagon vanishes. Hence we recover the presentation 
of Simons for the Weyl group $W(\mathrm{E}_6)$ as the quotient of the 
Coxeter group $W(\mathrm{P}_{10})$ by deflation of all free hexagons.
\end{proof}

\begin{remark}
The automorphism group $S_5$ of the Petersen graph can be identified with 
the group of geometric automorphisms of the Clebsch diagonal surface
\[ u+v+w+x+y=0\;,\;u^3+v^3+w^3+x^3+y^3=0 \]
in projective three space. Via the period map this surface corresponds 
to the central point 
$v_4=3e_0-e_1-e_2-e_3-e_4$
of the Gosset polytope $P$ for $n=4$. In this way $S_5$ becomes a subgroup
of $W(\mathrm{E}_6)$ as symmetry group of the configuration of the $27$
lines on the Clebsch diagonal surface. This monomorphism
$S_5\hookrightarrow W(\mathrm{E}_6)$, as described in the above proof, was
already discussed by Segre \cite{Segre 1942}. 

Likewise the dihedral group $D_6$ of order $12$ as automorphism group of the free 
hexagon can be identified with the group of geometric automorphisms of the degree 
$6$ binary form $u^6+v^6$, which corresponds via the period map to the central 
point $v_3=3e_0-e_1-e_2-e_3$ of the Gosset
polytope $P$ for $n=3$. In this way $D_6\hookrightarrow S_6$ and up to conjugation 
by (inner and outer) automorphisms of $S_6$ there is a unique monomorphism 
$D_6\hookrightarrow S_6$.

The symmetric group $S_2$ as automorphism group of the Coxeter diagram
$\mathrm{A}_3$ can be identified with the group of geometric automorphisms of the one 
parameter family of degree $6$ binary forms $(u+v)^2(u^4+tu^2v^2+v^4)$ with $-2<t<2$ 
via $(u,v)\mapsto(v,u)$, which corresponds via the period map to the central
line segment between the vertices $v_0$ and $v_2$ inside the Gosset polytope $P$ 
for $n=2$. In this way $S_2\hookrightarrow V_4\hookrightarrow S_4$ and up to 
conjugation there is a unique such monomorphism.
\end{remark}

Via the period map isomorphism $\overline{\mathcal{M}}_{rm}\rightarrow\overline{\mathbb{B}}_r/\Gamma(3)$ 
we get a tessellation of the moduli space $\overline{\mathcal{M}}_{rm}$ 
of marked maximally real objects by congruent copies $\gamma P$ of the Gosset 
polytope with $\gamma$ in the factor space $\Gamma/\Gamma(3)\Gamma_0$ and 
$\Gamma_0=\mathrm{Aut}(P)\hookrightarrow\Gamma/\Gamma(3)$ 
the natural monomorphism. The glue prescription is given by 
\[ \overline{\mathbb{B}}_r/\Gamma(3)=\{\sqcup_{\gamma}\;\gamma P\}/\sim \]
with
\[ \gamma P\supset \gamma F_i\sim(\gamma t_i)F_i\subset (\gamma t_i)P \]
and $F_i$ the wall of $P$ fixed by $r_i$ in the notation of 
Theorem{\;\ref{odd presentation theorem}}. The glue prescription was discussed 
in geometric terms by Yoshida \cite{Yoshida 1998},\cite{Yoshida 2001}.
This paper grew out of an attempt to understand his work.

\noindent
Gert Heckman, Radboud University Nijmegen: g.heckman@math.ru.nl
\newline
Sander Rieken, Radboud University Nijmegen: s.rieken@math.ru.nl


\begin{thebibliography}{25}

\bibitem{Allcock--Carlson--Toledo 2002}
Daniel Allcock, Jim Carlson and Domingo Toledo, The complex hyperbolic geometry of 
the moduli space of cubic surfaces, J. Algebraic Geometry {\bf 11} (2002), 659-724.

\bibitem{Allcock--Carlson--Toledo 2007}
Daniel Allcock, Jim Carlson and Domingo Toledo, Hyperbolic Geometry and the Moduli
space of Real Binary Sextics, in: Arithmetic and Geometry around Hypergeometric
Functions, Rolf--Peter Holzapfel, A. Muhammed Uludag and Masaaki Yoshida Editors,
Progress in Mathematics {\bf 260},  Birkh\"{a}user, 2007.

\bibitem{Allcock--Carlson--Toledo 2010}
Daniel Allcock, Jim Carlson and Domingo Toledo, Hyperbolic Geometry and Moduli of
Real Cubic Surfaces, Ann. Sci. Ecole Norm. Sup. {\bf 43} (2010), 69-115. 

\bibitem{Casselman 1966}
W.A. Casselman, Families of Curves and Automorphic Functions, Ph.D. Princeton
University, 1966.

\bibitem{ATLAS 1985}
J.H. Conway, R.T. Curtis, S.P. Norton, R.A. Parker, R.A. Wilson, The ATLAS of finite
groups, Oxford University Press, 1985.

\bibitem{Conway--Simons 2001}
John H. Conway and Christopher Simons, 26 Implies the Bimonster,
Journal of Algebra {\bf 235} (2001), 805-814.

\bibitem{Coxeter 1940}
H.S.M. Coxeter, The polytope $2_{21}$, whose $27$ vertices correspond
to the lines on a general cubic surface, Amer. J. Math. {\bf 62} (1940), 457-486.

\bibitem{Deligne--Mostow 1986}
P. Deligne and G.D. Mostow, Monodromy of hypergeometric functions and non-lattice
integral monodromy, Publ. Math. I.H.E.S. {\bf 63} (1986), 5-90.

\bibitem{Everitt--Ratcliffe--Tschantz 2010}
Brent Everitt, John G. Ratcliffe and Steven T. Tschantz, Right-angled Coxeter
polytopes, hyperbolic 6-manifolds and a problem of Siegel, arXiv:math.GT/1009.3231, 
2010. 

\bibitem{Heckman 2013}
Gert Heckman, The Allcock ball quotient, arXiv:math.AG/0752617, 2013.

\bibitem{Heckman 2013CG}
Gert Heckman, Coxeter Groups, Informal Lecture Notes, Fall 2013.

\bibitem{Ivanov 1992} A.A. Ivanov, A geometric characterization of the monster,
Durham Conference 1990, London Math. Soc. Lecture Notes Ser. {\bf 165},
Cambridge University Press (1992), 46-62.

\bibitem{Kempe 1886}
A.B. Kempe, A memoir on the theory of mathematical form, Phil. Trans. Royal Soc.
London {\bf 177} (1886), 1-70.

\bibitem{Norton 1992}
S.P. Norton, Constructing the monster, Durham Conference 1990, 
London Math. Soc. Lecture Notes Ser. {\bf 165},
Cambridge University Press (1992), 63-76.

\bibitem{Petersen 1898}
J. Petersen, Sur le th\'{e}or\`{e}me de Tait, Interm\'{e}diaire des
Math\'{e}maticiens {\bf 5}
(1898), 225-227.

\bibitem{Segre 1942}
B. Segre, The Non-singular Cubic Surfaces, Oxford University Press, 1942.

\bibitem{Shimura 1963}
G. Shimura, On analytic families of polarized abelian varieties and automorphic
functions, 
Ann. of Math. {\bf 78} (1963), 149-192.

\bibitem{Shimura 1964}
G. Shimura, On purely transcendental fields of automorphic functions of several
variables, Osaka J. Math. {\bf 1} (1964), 1-14.

\bibitem{Simons 2005}
Christopher S. Simons, An elementary Approach to the Monster,
The American Mathematical Monthly {\bf 112} (2005), 334-341.

\bibitem{Terada 1983}
T. Terada, Fonctions hyperg\'{e}ometriques $F_1$ et fonctions automorphes 
I and II, Math. Soc. Japan {\bf 35} (1983), 451-475 and {\bf 37} (1985), 173-185.

\bibitem{Todd 1945}
J.A. Todd, The 'Odd' Number Six, Proc. Cambridge Phil. Soc. {\bf 41} (1945), 66-68.

\bibitem{Vinberg 1980}
E.B. Vinberg, Hyperbolic reflection groups, Russian Math. Surv. {\bf 40} (1980), 31-75.

\bibitem{Yoshida 1998}
Masaaki Yoshida, The real loci of the configuration space of six points on the
projective line and a Picard modular 3-fold, Kumamoto J. Math. {\bf 11} (1998), 43-67.

\bibitem{Yoshida 2001}
Masaaki Yoshida, A hyperbolic structure on the real locus of the moduli space of
marked cubic surfaces, Topology {\bf 40} (2001), 469-473.

\end{thebibliography}
\end{document}